\documentclass[12pt,a4paper]{article}

\usepackage{amsmath,amssymb,amsfonts,amsthm,bbold}

\newtheorem{proposition}{Proposition} 
\newtheorem{corollary}{Corollary}
\usepackage[margin=2cm]{geometry}
\usepackage[round,sort,comma]{natbib}
\usepackage{graphicx} 
\usepackage{xcolor}
\usepackage[colorlinks=true,linkcolor=red,citecolor=red,urlcolor=blue]{hyperref}
\usepackage[title]{appendix}
\usepackage{setspace}
\usepackage{booktabs}
\onehalfspacing

\title{\textbf{Multiscale Asymptotic Normality in Quantile Regression: Hilbert Matrices and Polynomial Designs}}
\usepackage{authblk}

\author[1]{Sa\"{i}d Maanan\thanks{Corresponding author: \texttt{maanan.said@gmail.com}}}
\author[2]{Azzouz Dermoune}
\author[1]{Ahmed El Ghini}

\affil[1]{LEAM, Mohammed V University in Rabat,  Morocco}
\affil[2]{Université de Lille, Laboratoire Paul Painlevé, Lille, France}

\date{}
\newcommand{\keywords}[1]{\par\noindent\textbf{Keywords:} #1}

\begin{document}
\maketitle
\begin{abstract}
This paper investigates the asymptotic properties of quantile regression estimators in linear models, with a particular focus on polynomial regressors and robustness to heavy-tailed noise. Under independent and identically distributed (i.i.d.) errors with continuous density around the quantile of interest, we establish a general Central Limit Theorem (CLT) for the quantile regression estimator under normalization using \(\Delta_n^{-1}\), yielding asymptotic normality with variance \( \tau(1-\tau)/f^2(0) \cdot D_0^{-1} \). In the specific case of polynomial regressors, we show that the design structure induces a Hilbert matrix in the asymptotic covariance, and we derive explicit scaling rates for each coefficient. This generalizes Pollard’s and Koenker’s earlier results on LAD regression to arbitrary quantile levels \( \tau \in (0, 1) \). We also examine the convergence behavior of the estimators and propose a relaxation of the standard CLT-based confidence intervals, motivated by a theoretical inclusion principle. This relaxation replaces the usual \( T^{j+1/2} \) scaling with \( T^\alpha \), for \( \alpha < j + 1/2 \), to improve finite-sample coverage. Through extensive simulations under Laplace, Gaussian, and Cauchy noise, we validate this approach and highlight the improved robustness and empirical accuracy of relaxed confidence intervals. This study provides both a unifying theoretical framework and practical inference tools for quantile regression under structured regressors and heavy-tailed disturbances.
\end{abstract}

\keywords{Quantile regression, Asymptotic normality, Polynomial trends, Hilbert matrix, Heavy-tailed distributions, Central limit theorem}

\section{Introduction}  
\label{section_one}

Quantile regression is a cornerstone of robust regression analysis, widely employed in time series modeling and other statistical applications due to its resilience against outliers and heavy-tailed noise \citep{Fox2018, Koenker2005}. Unlike classical least squares (LS) estimation, which minimizes the sum of squared residuals, quantile regression minimizes an asymmetrically weighted sum of absolute residuals, providing greater robustness in scenarios where the noise distribution exhibits heterogeneity or heavy tails.

To illustrate this robustness, consider the simple model \(y_i = \beta^* + u_i,\ i = 1, \ldots, n\), where \(u_i\) are i.i.d.\ and follow a Cauchy distribution. In this case, the median of the observations \(y_i\) is the least absolute deviation (LAD) estimator \(\hat{\beta}_{1/2}\) of \(\beta^*\), which satisfies asymptotic normality \citep{Stigler1973}, while the least squares estimator \(\hat{\beta}_{\text{LS}}\), equal to the empirical mean of the \(y_i\), neither converges in probability nor satisfies asymptotic normality. This highlights the limitations of least squares under heavy-tailed noise and the broader utility of quantile regression.

\citet{Bassett1978} considered the problem of estimating the \(p\)-dimensional parameter vector \(\beta^*\) in the linear model
\[
y_i = x_i^\top \beta^* + u_i, \quad i=1, \ldots, n,
\]
where \(x_i^\top = (x_{i1}, \ldots, x_{ip})\) is a row vector of regressors. Assuming that the errors \(u_i\) are i.i.d.\ with a cumulative distribution function \(F\) having median zero is equivalent to saying that the conditional median of \(y_i\) given \(x_i\) equals \(x_i^\top \beta^*\). The LAD estimator of \(\beta^*\), also denoted by \(\hat{\beta}_{1/2}\), minimizes the objective function:
\[
g(\beta) = \sum_{i=1}^n |y_i - x_i^\top \beta|.
\]

When the error density \(f\) is positive and continuous at zero and the design satisfies \(\frac{1}{n} \sum_{i=1}^n x_i x_i^\top \to P\) with \(P\) positive definite, \citet{Bassett1978} showed that
\[
\sqrt{n}(\hat{\beta}_{1/2} - \beta^*) \xrightarrow{d} N(0, \omega^2 P^{-1}),
\]
where \(\omega^2 = \frac{1}{4f^2(0)}\) is the asymptotic variance of the sample median.

Later, \citet{Pollard1991} provided a more geometric proof using convexity tools and showed that under (A1) \( \sum_{i=1}^n x_i x_i^\top \) is positive definite, and (A2) \( \max_{i \leq n} \| \left( \sum_{i=1}^n x_i x_i^\top \right)^{-1/2} x_i \| \to 0 \) as \( n \to \infty \), we have
\[
2f(0) \left( \sum_{i=1}^n x_i x_i^\top \right)^{1/2} (\hat{\beta}_{1/2} - \beta^*) \xrightarrow{d} N(0, I_p),
\]
where \(I_p\) is the identity matrix of size \(p \times p\). Observe that if \(\frac{1}{n} \sum_{i=1}^n x_i x_i^\top \to P\), then
\[
\sqrt{n}(\hat{\beta}_{1/2} - \beta^*) \xrightarrow{d} N\left(0,\, \frac{1}{4f^2(0)} P^{-1}\right),
\]
recovering the result of \citet{Bassett1978}.

A generalization to arbitrary quantiles \(\tau \in (0,1)\) is given by \citet[Theorem 4.1]{Koenker2005}. The quantile regression estimator \(\hat{\beta}_\tau\) is defined as the minimizer of the check loss function:
\[
g_\tau(\beta) = \sum_{i=1}^n \rho_\tau(y_i - x_i^\top \beta), \quad \text{where} \quad \rho_\tau(u) = u(\tau - \mathbb{I}_{\{u < 0\}}).
\]
Note that \(g_{1/2}(\beta) = \frac{1}{2} \sum |y_i - x_i^\top \beta|\), recovering the LAD case. Under (A0) \(P(u_i < 0) = \tau\), and (A1) \( \frac{1}{n} \sum_{i=1}^n x_i x_i^\top \to D_0 \), with \(D_0\) positive definite, Koenker proves:
\[
\sqrt{n}(\hat{\beta}_\tau - \beta^*) \xrightarrow{d} N\left(0,\, \frac{\tau(1-\tau)}{f^2(0)} D_0^{-1} \right).
\]
This result provides the foundational framework for analyzing quantile regression estimators in a broad range of settings.

Despite the extensive body of research on quantile regression, its application to polynomial regressors has received relatively little attention. Polynomial regression is particularly useful for capturing non-linear trends and deterministic components in data. Extending quantile regression to this setting presents unique challenges, especially in characterizing the asymptotic behavior of the estimators and understanding their scaling properties under non-standard design matrices.

In this paper, we address these challenges by establishing the multiscale asymptotic normality of quantile regression estimators for polynomial models. A key contribution is the derivation of the asymptotic precision matrix, which we show to be proportional to Hilbert matrices, a class of structured matrices with well-understood mathematical properties. Our results specialize Koenker's general asymptotic theory to polynomial regressors, and refine it by explicitly characterizing the covariance structure in terms of Hilbert matrices. 

We also analyze the speed of convergence of quantile regression estimators to the true parameters, both in probability and almost surely, through a combination of theoretical results and simulation studies. These simulations provide insights into the robustness and efficiency of quantile regression under various noise distributions, including Laplace, Gaussian, and Cauchy.

This work bridges theoretical advancements with practical implications, showcasing the versatility of quantile regression in capturing complex trends and ensuring robust performance in challenging data environments. Three empirical applications—spanning environmental, climatic, and energy domains—demonstrate the method’s practical utility: (i) modeling drought risk via minimal river discharge (\(\tau = 0.05\)) in Brazil’s S\~{a}o Francisco River, where quadratic trends reveal anthropogenic impacts on water scarcity; (ii) capturing accelerating global temperature trends (\(\tau = 0.50\)) through median regression, robust to measurement outliers; (iii) forecasting peak electricity demand (\(\tau = 0.95\)) in Spain, isolating extreme load dynamics from seasonal cycles. These case studies validate the Hilbert-matrix covariance structure’s robustness across quantiles and noise types, offering policy-relevant insights into extremes.

The rest of the paper is structured as follows: Section~\ref{section_two} establishes the multiscale asymptotic normality of quantile regression estimators for polynomial models, deriving their connection to Hilbert matrices. Section~\ref{section_three} analyzes convergence rates through theoretical tail probabilities and comprehensive simulations across noise types (Laplace, Gaussian, Cauchy) and quantiles \(\tau\). Section~\ref{section_four} demonstrates practical utility through case studies in environmental, climatic, and energy domains, followed by concluding remarks.

\section{Multiscale Asymptotic Normality for Quantile Regression in Linear Models}
\label{section_two}
In this section, we establish the multiscale asymptotic normality of quantile regression estimators in linear models using a new normalization based on a scaling matrix \(\Delta_n\). This approach follows the spirit of \citet{Pollard1991} and \citet{Koenker2005}.
\subsection*{Normalization and Assumptions}
We consider the linear model:
\[
y_i = x_i^\top \beta^* + u_i, \quad i = 1, \ldots, n,
\]
where \(x_i = (x_{i1}, \ldots, x_{ip})^\top\) is a \(p\)-dimensional vector of regressors and \(u_i\) are i.i.d. errors.\\
Let the normalization matrix be defined as:
\[
\Delta_n = \operatorname{diag} \left( \left( \sum_{i=1}^n x_{i1}^2 \right)^{1/2}, \ldots, \left( \sum_{i=1}^n x_{ip}^2 \right)^{1/2} \right).
\]
We assume:
\begin{itemize}
    \item[(A1)] The errors \(u_i\) are i.i.d. with a continuous density \(f\) that is strictly positive around zero.
    \item[(A2)] The normalized design matrix converges:
    \[
    \sum_{i=1}^n \Delta_n^{-1} x_i x_i^\top \Delta_n^{-1} \to D_0, \quad \text{with } D_0 \text{ positive definite}.
    \]
    \item[(A3)] The maximum normalized regressor norm vanishes:
    \[
    \max_{1 \le i \le n} \left\| \Delta_n^{-1} x_i \right\| \to 0 \quad \text{as } n \to \infty.
    \]
\end{itemize}

\begin{proposition}[Asymptotic Normality under Normalization]
Under assumptions (A1)–(A3), the quantile regression estimator \(\hat{\beta}_\tau\) satisfies:
\[
\Delta_n^{-1} (\hat{\beta}_\tau - \beta^*) \xrightarrow{d} \mathcal{N} \left( 0, \, \frac{\tau(1-\tau)}{f^2(0)} D_0^{-1} \right).
\]
\end{proposition}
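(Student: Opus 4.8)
The plan is to adapt the convexity argument of \citet{Pollard1991} and \citet{Koenker2005} to the diagonal normalization $\Delta_n$, the only structural change being that the role played by $(\sum_i x_ix_i^\top)^{1/2}$ in the classical proof is now taken by $\Delta_n$, with assumptions (A2)--(A3) tailored precisely so that the two quantities controlling the limit behave as required. First I would reparametrize: writing $\beta=\beta^*+\Delta_n^{-1}\delta$ and setting $a_{ni}:=\Delta_n^{-1}x_i$, the shifted objective
\[
W_n(\delta)=\sum_{i=1}^n\bigl[\rho_\tau(u_i-a_{ni}^\top\delta)-\rho_\tau(u_i)\bigr]
\]
is convex in $\delta$ and is minimized at the normalized deviation $\hat\delta_n=\Delta_n(\hat\beta_\tau-\beta^*)$, whose limit law we seek. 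The two facts I would carry through the whole argument are $\sum_i a_{ni}a_{ni}^\top\to D_0$ (which is exactly (A2), since $a_{ni}a_{ni}^\top=\Delta_n^{-1}x_ix_i^\top\Delta_n^{-1}$) and $\max_i\|a_{ni}\|\to0$ (which is exactly (A3)).

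The next step is to split $W_n$ using the elementary identity (Knight's identity)
\[
\rho_\tau(u-v)-\rho_\tau(u)=v\bigl(\mathbb{I}_{\{u<0\}}-\tau\bigr)+\int_0^v\bigl(\mathbb{I}_{\{u\le s\}}-\mathbb{I}_{\{u\le0\}}\bigr)\,ds,
\]
which separates $W_n(\delta)$ into a linear stochastic term $-\delta^\top S_n$, where $S_n=\sum_i a_{ni}\psi_\tau(u_i)$ and $\psi_\tau(u)=\tau-\mathbb{I}_{\{u<0\}}$, and a nonnegative curvature term $R_n(\delta)$. Using the quantile normalization $P(u_i<0)=\tau$ (implicit in taking $x_i^\top\beta^*$ as the conditional $\tau$-quantile), the summands of $S_n$ are independent, mean zero, with per-term variance $\tau(1-\tau)\,a_{ni}a_{ni}^\top$; assumption (A3) supplies the Lindeberg condition, so a Lindeberg--Feller CLT yields $S_n\xrightarrow{d}Z\sim\mathcal N(0,\tau(1-\tau)D_0)$. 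For the curvature term I would compute $\mathbb{E}\,R_n(\delta)=\sum_i\int_0^{a_{ni}^\top\delta}(F(s)-F(0))\,ds$, Taylor-expand $F(s)-F(0)=f(0)s+o(s)$ via (A1), and invoke $\max_i\|a_{ni}\|\to0$ to make the $o(\cdot)$ remainder uniform, giving $\mathbb{E}\,R_n(\delta)\to\frac{f(0)}{2}\,\delta^\top D_0\delta$; a variance bound, again using (A3), gives $R_n(\delta)-\mathbb{E}\,R_n(\delta)\xrightarrow{p}0$. Hence, for each fixed $\delta$,
\[
W_n(\delta)\xrightarrow{d}W(\delta):=-\delta^\top Z+\tfrac{f(0)}{2}\,\delta^\top D_0\delta .
\]

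To pass from this finite-dimensional convergence of the convex maps $W_n$ to convergence of their minimizers, I would invoke Pollard's convexity lemma: pointwise convergence in distribution of convex functions to a limit with a unique minimizer forces the argmins to converge in distribution. Since $D_0$ is positive definite, $W$ is strictly convex with the unique minimizer $\delta^\star=f(0)^{-1}D_0^{-1}Z$, so that
\[
\Delta_n(\hat\beta_\tau-\beta^*)=\hat\delta_n\xrightarrow{d}f(0)^{-1}D_0^{-1}Z\sim\mathcal N\!\Bigl(0,\tfrac{\tau(1-\tau)}{f^2(0)}D_0^{-1}\Bigr),
\]
the covariance following from $D_0^{-1}(\tau(1-\tau)D_0)D_0^{-1}=\tau(1-\tau)D_0^{-1}$, which is the assertion.

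I expect the delicate points to be two. The genuinely nontrivial step is the convexity-lemma transfer from convergence of the objectives to convergence of the minimizers; this is where tightness of $\hat\delta_n$ is secured and where almost-sure uniqueness of the limiting minimizer must be verified. The second, more technical, point is the uniform control of the Taylor remainder in $\mathbb{E}\,R_n(\delta)$ together with the bound on $\operatorname{Var}R_n(\delta)$: both hinge on the uniform negligibility $\max_i\|a_{ni}\|\to0$ of (A3), which replaces Pollard's $\max_i\|(\sum_i x_ix_i^\top)^{-1/2}x_i\|\to0$ condition and is the assumption doing the real work.
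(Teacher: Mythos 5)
Your proposal is correct and takes essentially the same route as the paper's own proof: reparametrization with $a_{ni}=\Delta_n^{-1}x_i$, Knight's identity splitting the objective into a linear score term handled by the Lindeberg--Feller CLT (with (A3) supplying the Lindeberg condition) plus a curvature term whose expectation converges to $\frac{f(0)}{2}\,\delta^\top D_0\delta$ by (A1)--(A2), and finally Pollard's convexity lemma to pass from the objectives to the minimizers. If anything you are more careful than the paper on two points: you state explicitly the centering condition $P(u_i<0)=\tau$ (which the paper's Section~2 assumptions omit, though it is needed for $S_n$ to be mean zero), and you correctly identify the limiting quantity as $\hat\delta_n=\Delta_n(\hat\beta_\tau-\beta^*)$, i.e.\ the deviation multiplied by the growing matrix $\Delta_n$, whereas the paper consistently writes $\Delta_n^{-1}(\hat\beta_\tau-\beta^*)$ even though its own reparametrized objective $\varphi_n(\theta)$ is minimized at $\theta=\Delta_n(\hat\beta_\tau-\beta^*)$ --- a notational slip in the paper, not in your argument.
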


\begin{proof}
Let \(\theta = \Delta_n^{-1} (\hat{\beta}_\tau - \beta^*)\). Define the reparametrized objective function:
\[
\varphi_n(\theta) = \sum_{i=1}^n \left[ \rho_\tau \left( u_i - x_i^\top \Delta_n^{-1} \theta \right) - \rho_\tau(u_i) \right],
\]
where \(\rho_\tau(u) = u (\tau - \mathbb{1}_{[u<0]})\) is the quantile loss function.

Using Knight’s identity \citep{Knight1998}, we decompose:
\[
\rho_\tau(u - v) - \rho_\tau(u) = -v \psi_\tau(u) + \int_0^v \left( \mathbb{1}_{[u \le s]} - \mathbb{1}_{[u \le 0]} \right) ds,
\]
with \(\psi_\tau(u) = \tau - \mathbb{1}_{[u < 0]}\). Hence,
\[
\varphi_n(\theta) = T_1 + T_2,
\]
where
\[
T_1 = -\sum_{i=1}^n \psi_\tau(u_i) x_i^\top \Delta_n^{-1} \theta, \quad
T_2 = \sum_{i=1}^n \int_0^{s_i} \left( \mathbb{1}_{[u_i \le s]} - \mathbb{1}_{[u_i \le 0]} \right) ds,
\]
and \(s_i = x_i^\top \Delta_n^{-1} \theta\).

Using the Lindeberg–Feller Central Limit Theorem (see \citet{Lindeberg1922}), we deduce that the linear term
$$
T_1 = -\sum_{i=1}^n \psi_\tau(u_i) x_i^\top \Delta_n^{-1} \theta
$$
converges in distribution to a centered normal random variable with covariance $\tau(1 - \tau) D_0$, that is:
$$
T_1 \xrightarrow{d} W^\top \theta, \quad \text{where } W \sim \mathcal{N}(0, \tau(1 - \tau) D_0).
$$
The Lindeberg condition is satisfied thanks to Assumption\~(A3), which ensures that the normalized regressors vanish uniformly.

Next, for \(T_2\), expand \(F(s_i)\) via the mean-value theorem:
\[
F(s_i) = F(0) + f(\tilde{s}_i) s_i \quad \text{for some } \tilde{s}_i \in (0, s_i),
\]
yielding:
\[
\mathbb{E}[T_2] = \sum_{i=1}^n \int_0^{s_i} \left[ F(s) - F(0) \right] ds = \sum_{i=1}^n \frac{1}{2} f(0) s_i^2 + o(1).
\]
Thus:
\[
\mathbb{E}[T_2] = \frac{f(0)}{2} \theta^\top \left( \sum_{i=1}^n \Delta_n^{-1} x_i x_i^\top \Delta_n^{-1} \right) \theta + o(1).
\]
By assumption (A2), this converges to \(\frac{f(0)}{2} \theta^\top D_0 \theta\). Standard convexity arguments (e.g., from \citet{Pollard1991} or \citet{Koenker2005}) show that \(T_2 - \mathbb{E}[T_2] \to 0\) in probability and the minimizer converges:
\[
\Delta_n^{-1} (\hat{\beta}_\tau - \beta^*) = \theta \xrightarrow{d} \frac{D_0^{-1} W}{f(0)}.
\]
\end{proof}

\begin{corollary}
Under the same assumptions, for all \(j_1, j_2 = 1, \ldots, p\),
\[
\frac{ \sum_{i=1}^n x_{ij_1} x_{ij_2} }{ \left( \sum_{i=1}^n x_{ij_1}^2 \right)^{1/2} \left( \sum_{i=1}^n x_{ij_2}^2 \right)^{1/2} } \to D_0[j_1, j_2].
\]
\end{corollary}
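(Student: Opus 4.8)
The plan is to recognize the corollary as nothing more than the entrywise reading of the matrix convergence already postulated in (A2). The entire argument reduces to computing a single matrix entry and then invoking the elementary fact that, in finite dimensions, convergence of matrices coincides with convergence of each coordinate.

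First I would unpack the normalization. Since \(\Delta_n\) is diagonal, write \(\delta_{nj} := \left(\sum_{i=1}^n x_{ij}^2\right)^{1/2}\) for its \((j,j)\) entry, so that \(\Delta_n^{-1}\) is diagonal with entries \(1/\delta_{nj}\). Then for each fixed \(i\), the matrix \(\Delta_n^{-1} x_i x_i^\top \Delta_n^{-1}\) has \((j_1, j_2)\) entry equal to \(\frac{x_{ij_1} x_{ij_2}}{\delta_{nj_1}\delta_{nj_2}}\), because left-multiplication by the diagonal \(\Delta_n^{-1}\) scales row \(j_1\) by \(1/\delta_{nj_1}\) while right-multiplication scales column \(j_2\) by \(1/\delta_{nj_2}\).

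Next I would sum over \(i\) and pull the \(n\)-dependent normalizers out of the sum, since \(\delta_{nj_1}\) and \(\delta_{nj_2}\) do not depend on \(i\). This yields
\[
\left( \sum_{i=1}^n \Delta_n^{-1} x_i x_i^\top \Delta_n^{-1} \right)[j_1, j_2] = \frac{\sum_{i=1}^n x_{ij_1} x_{ij_2}}{\delta_{nj_1}\delta_{nj_2}} = \frac{\sum_{i=1}^n x_{ij_1} x_{ij_2}}{\left(\sum_{i=1}^n x_{ij_1}^2\right)^{1/2}\left(\sum_{i=1}^n x_{ij_2}^2\right)^{1/2}},
\]
which is exactly the ratio appearing in the statement. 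Assumption (A2) asserts that the left-hand matrix converges to \(D_0\); since convergence in \(\mathbb{R}^{p \times p}\) is equivalent to convergence of every fixed \((j_1, j_2)\) coordinate, passing to the \((j_1, j_2)\) entry immediately gives convergence of the ratio to \(D_0[j_1, j_2]\).

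There is no serious obstacle here: the result is a corollary in the most literal sense, obtained by extracting one coordinate from the hypothesis (A2). The only point I would state carefully — the closest thing to a subtlety — is the equivalence between matrix convergence and entrywise convergence, which holds because the ambient space is finite-dimensional and all norms on it are equivalent. I would also note that \(\delta_{nj} > 0\) for all \(n\) large enough (guaranteed as soon as coordinate \(j\) of the design is not identically zero, which is implicit in the very existence of \(\Delta_n^{-1}\) used throughout), so that the ratios are well defined and the division is legitimate.
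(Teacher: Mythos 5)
Your proof is correct and follows exactly the route the paper takes: the paper's one-line proof ("matrix scaling and entrywise convergence of the normalized Gram matrix") is precisely your argument, which you merely spell out in full by computing the \((j_1,j_2)\) entry of \(\sum_{i=1}^n \Delta_n^{-1} x_i x_i^\top \Delta_n^{-1}\) and invoking (A2). Your added remarks on finite-dimensional entrywise convergence and on \(\delta_{nj} > 0\) are sound and only make explicit what the paper leaves implicit.
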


\begin{proof}
The result follows directly from matrix scaling and entrywise convergence of the normalized Gram matrix.
\end{proof}

\begin{corollary}[Generalized Scaling]
Assume in addition that:
\[
\frac{ \sum_{i=1}^n x_{ij_1} x_{ij_2} }{ s(n, j_1, j_2) } \to H_p[j_1, j_2], \quad \text{for all } j_1, j_2,
\]
with \(H_p\) symmetric and positive definite, and that:
\[
\max_{i \le n} \left\| \left( \sum_{i=1}^n x_i x_i^\top \right)^{-1/2} x_i \right\| \to 0.
\]
Then,
\[
\operatorname{diag}\left( \sqrt{s(n, 1,1)}, \ldots, \sqrt{s(n,p,p)} \right)(\hat{\beta}_\tau - \beta^*) \xrightarrow{d} \mathcal{N}\left(0, \frac{\tau(1-\tau)}{f^2(0)} H_p^{-1} \right),
\]
and
\[
\Delta_n^{-1} (\hat{\beta}_\tau - \beta^*) \xrightarrow{d} \mathcal{N}\left(0, \frac{\tau(1-\tau)}{f^2(0)} D_0^{-1} \right).
\]
where \(D_0[j_1, j_2] = \frac{ H_p[j_1, j_2] }{ \sqrt{ H_p[j_1, j_1] H_p[j_2, j_2] } }\).
\end{corollary}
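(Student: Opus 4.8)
The plan is to treat the generalized scaling as a change of diagonal normalization and to deduce both limits from the Proposition, which provides $\Delta_n(\hat\beta_\tau-\beta^*)\xrightarrow{d}\mathcal N(0,\tfrac{\tau(1-\tau)}{f^2(0)}D_0^{-1})$. Write $A_n:=\operatorname{diag}(\sqrt{s(n,1,1)},\ldots,\sqrt{s(n,p,p)})$ for the new normalizer. First I would record that $A_n$ and $\Delta_n$ are of the same order: taking $j_1=j_2=j$ in the scaling hypothesis gives $\sum_i x_{ij}^2/s(n,j,j)\to H_p[j,j]$, so
\[
\frac{\Delta_n[j,j]}{A_n[j,j]}=\left(\frac{\sum_i x_{ij}^2}{s(n,j,j)}\right)^{1/2}\longrightarrow \sqrt{H_p[j,j]},
\]
and hence $A_n\Delta_n^{-1}\to G^{-1}$, where $G:=\operatorname{diag}(\sqrt{H_p[1,1]},\ldots,\sqrt{H_p[p,p]})$ is deterministic and invertible (the diagonal of the positive-definite $H_p$ is strictly positive).

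Next I would verify that the corollary's hypotheses reduce to (A1)--(A3), so that the Proposition applies and yields the second displayed limit. The key is that the $(j_1,j_2)$ entry of the Proposition's normalized Gram matrix factors as
\[
\frac{\sum_i x_{ij_1}x_{ij_2}}{(\sum_i x_{ij_1}^2)^{1/2}(\sum_i x_{ij_2}^2)^{1/2}}=\frac{\sum_i x_{ij_1}x_{ij_2}/s(n,j_1,j_2)}{\sqrt{(\sum_i x_{ij_1}^2/s(n,j_1,j_1))(\sum_i x_{ij_2}^2/s(n,j_2,j_2))}}\cdot\frac{s(n,j_1,j_2)}{\sqrt{s(n,j_1,j_1)s(n,j_2,j_2)}},
\]
where the first factor tends to $H_p[j_1,j_2]/\sqrt{H_p[j_1,j_1]H_p[j_2,j_2]}$, which is exactly the stated $D_0[j_1,j_2]$. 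The hard part will be the second factor: identifying (A2) requires the scaling-compatibility condition $s(n,j_1,j_2)/\sqrt{s(n,j_1,j_1)s(n,j_2,j_2)}\to 1$, i.e.\ the off-diagonal scalings must be consistent with the diagonal ones. This is the one genuinely nontrivial ingredient; it is automatic for the polynomial designs of interest, where $s(n,j_1,j_2)\asymp T^{j_1+j_2-1}$ makes the ratio identically $1$. I would then note that $D_0=G^{-1}H_pG^{-1}$ is congruent to $H_p$ and therefore positive definite, and that, since $D_0$ is a fixed positive-definite limit, the assumed $\max_i\|(\sum_i x_ix_i^\top)^{-1/2}x_i\|\to 0$ is equivalent to (A3): writing $D_n:=\sum_i\Delta_n^{-1}x_ix_i^\top\Delta_n^{-1}\to D_0$ one has $x_i^\top(\sum_i x_ix_i^\top)^{-1}x_i=(\Delta_n^{-1}x_i)^\top D_n^{-1}(\Delta_n^{-1}x_i)$, so the two maxima are pinched by the extreme eigenvalues of $D_n^{-1}$.

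Finally, for the first limit I would push the Proposition's conclusion through the deterministic factor $A_n\Delta_n^{-1}$. Writing $A_n(\hat\beta_\tau-\beta^*)=(A_n\Delta_n^{-1})\,\Delta_n(\hat\beta_\tau-\beta^*)$ and applying Slutsky's theorem with $A_n\Delta_n^{-1}\to G^{-1}$ gives
\[
A_n(\hat\beta_\tau-\beta^*)\xrightarrow{d}\mathcal N\!\left(0,\ \frac{\tau(1-\tau)}{f^2(0)}\,G^{-1}D_0^{-1}G^{-1}\right).
\]
The proof then closes with the one-line identity $GD_0G=H_p$, since entrywise $(GD_0G)[j_1,j_2]=\sqrt{H_p[j_1,j_1]}\,D_0[j_1,j_2]\,\sqrt{H_p[j_2,j_2]}=H_p[j_1,j_2]$; hence $G^{-1}D_0^{-1}G^{-1}=(GD_0G)^{-1}=H_p^{-1}$, which produces the claimed covariance $\tfrac{\tau(1-\tau)}{f^2(0)}H_p^{-1}$. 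Apart from the scaling-compatibility condition isolated above, everything reduces to diagonal bookkeeping, one congruence argument, and a single application of Slutsky's theorem.
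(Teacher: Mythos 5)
Your proof is correct, and it is worth noting that the paper states this corollary with no proof at all (the preceding corollary is dispatched with the single sentence that it ``follows directly from matrix scaling''), so your argument supplies precisely the derivation the authors leave implicit: the diagonal comparison $A_n\Delta_n^{-1}\to G^{-1}$ with $G=\operatorname{diag}\bigl(\sqrt{H_p[1,1]},\ldots,\sqrt{H_p[p,p]}\bigr)$, the reduction of the hypotheses to (A1)--(A3) so the Proposition applies, one application of Slutsky, and the congruence $G D_0 G = H_p$, which is the exact content of the paper's formula $D_0[j_1,j_2]=H_p[j_1,j_2]/\sqrt{H_p[j_1,j_1]H_p[j_2,j_2]}$. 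Two observations in your write-up improve on the paper. First, you correctly isolate that the statement needs the scaling-compatibility condition $s(n,j_1,j_2)/\sqrt{s(n,j_1,j_1)\,s(n,j_2,j_2)}\to 1$, which the paper tacitly assumes: without it the normalized Gram matrix need not converge to the stated $D_0$, and the two displayed limits cannot both hold. (A small refinement: compatibility is only needed for pairs with $H_p[j_1,j_2]\neq 0$; where $H_p[j_1,j_2]=0$, boundedness of the ratio suffices. Also, in the polynomial application the ratio is identically $1$ because $s(T,j_1,j_2)=T^{j_1+j_2+1}$ exactly, not merely up to constants, so your $\asymp$ should be an equality; your exponent $j_1+j_2-1$ matches the paper's $j_1+j_2+1$ under $1$-based versus $0$-based indexing.) Second, you silently and appropriately correct the paper's inverted normalization: the Proposition and the corollary's second display write $\Delta_n^{-1}(\hat\beta_\tau-\beta^*)$, which tends to zero since the diagonal of $\Delta_n$ diverges; the meaningful statement, consistent with the corollary's first display where $A_n$ multiplies rather than divides, is $\Delta_n(\hat\beta_\tau-\beta^*)$, and that is the version your Slutsky step uses. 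Your eigenvalue-pinching argument showing that the Pollard-type condition $\max_{i\le n}\bigl\|\bigl(\sum_{i=1}^n x_i x_i^\top\bigr)^{-1/2}x_i\bigr\|\to 0$ is equivalent to (A3) is also sound, since $D_n=\sum_i \Delta_n^{-1}x_i x_i^\top\Delta_n^{-1}\to D_0$ positive definite keeps the spectrum of $D_n^{-1}$ bounded away from $0$ and $\infty$ for large $n$; just note that this step must come after you have established (A2) via the compatibility condition, which is the order you in fact follow.
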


\subsection*{Application to Polynomial Regressors}

\begin{corollary}
\label{corollary3}
Let \(x_{tj} = t^j\) for \(j = 0, \ldots, p\), and \(t = 1, \ldots, T\). Then the design matrix entries satisfy:
\[
\sum_{t=1}^T t^{j_1 + j_2} = \frac{T^{j_1 + j_2 + 1}}{j_1 + j_2 + 1} + O(T^{j_1 + j_2}).
\]
Hence, choosing \(s(T, j_1, j_2) = T^{j_1 + j_2 + 1}\), the limiting matrix \(H_{p+1}\) with entries:
\[
H_{p+1}[j_1, j_2] = \frac{1}{j_1 + j_2 + 1}, \quad j_1, j_2 = 0, \ldots, p,
\]
is the \((p+1)\times(p+1)\) Hilbert matrix.

The normalization matrix \(\Delta_T\) has entries:
\[
\Delta_T[j,j] = \left( \sum_{t=1}^T t^{2j} \right)^{1/2} = \sqrt{\frac{T^{2j+1}}{2j+1} + O(T^{2j})}.
\]
This yields a limiting covariance structure involving the inverse Hilbert matrix and confirms the multiscale behavior of polynomial quantile regression.
\end{corollary}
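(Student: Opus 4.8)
The plan is to treat this corollary as an explicit verification that the polynomial design $x_{tj} = t^j$ meets the hypotheses of the Generalized Scaling corollary, together with an identification of the limiting matrix and the coefficient-wise normalization rates. I would proceed in four steps: (i) establish the power-sum asymptotics; (ii) read off the limiting matrix $H_{p+1}$ and confirm that it is the Hilbert matrix; (iii) verify that $H_{p+1}$ is positive definite and that the uniform-negligibility (Lindeberg-type) condition holds; and (iv) invoke the Generalized Scaling corollary to obtain the stated limiting covariance together with the entries of $\Delta_T$.

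For step (i), the core computation is the asymptotic expansion of the power sum $\sum_{t=1}^T t^m$. By Faulhaber's formula (equivalently, the Euler--Maclaurin summation formula), $\sum_{t=1}^T t^m = \frac{T^{m+1}}{m+1} + \frac{1}{2} T^m + \cdots$, with the omitted terms of order $T^{m-1}$ and lower. Setting $m = j_1 + j_2$ gives the claimed identity with error $O(T^{j_1 + j_2})$, and setting $m = 2j$ yields the stated form of the diagonal entries of $\Delta_T$. Dividing the $(j_1, j_2)$ Gram entry by $s(T, j_1, j_2) = T^{j_1 + j_2 + 1}$ and letting $T \to \infty$ annihilates the error term and leaves $H_{p+1}[j_1, j_2] = 1/(j_1 + j_2 + 1)$, the defining entry of the $(p+1) \times (p+1)$ Hilbert matrix.

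For the positive definiteness demanded by the Generalized Scaling corollary, I would use the integral representation $\frac{1}{j_1 + j_2 + 1} = \int_0^1 t^{j_1} t^{j_2}\, dt$, which exhibits $H_{p+1}$ as the Gram matrix of the monomials $1, t, \ldots, t^p$ in $L^2[0,1]$; since these are linearly independent, the Gram matrix is strictly positive definite. The step I expect to be the main obstacle is verifying the uniform-negligibility condition $\max_{t \le T} \| ( \sum_s x_s x_s^\top )^{-1/2} x_t \| \to 0$. Here I would exploit the scaling relation $\sum_s x_s x_s^\top = \Delta_T (D_0 + o(1)) \Delta_T$, where $D_0$ is the positive definite limit of Assumption (A2), so that $x_t^\top ( \sum_s x_s x_s^\top )^{-1} x_t = (\Delta_T^{-1} x_t)^\top (D_0^{-1} + o(1)) (\Delta_T^{-1} x_t)$. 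The $j$-th component of $\Delta_T^{-1} x_t$ equals $t^j / (\sum_s s^{2j})^{1/2} \sim \sqrt{2j+1}\,(t/T)^j\, T^{-1/2}$, which is maximized at $t = T$ with value $\sqrt{2j+1}\, T^{-1/2}$; hence $\| \Delta_T^{-1} x_t \| = O(T^{-1/2})$ uniformly in $t$. Since $D_0^{-1}$ is a fixed bounded operator, the quadratic form tends to zero uniformly in $t$, establishing the condition. With all hypotheses in place, the Generalized Scaling corollary applies with its generic limit matrix taken to be $H_{p+1}$, yielding the normal limit with covariance $\frac{\tau(1-\tau)}{f^2(0)} H_{p+1}^{-1}$ and confirming the multiscale rates $\Delta_T[j,j] \sim T^{j+1/2}/\sqrt{2j+1}$ for the individual coefficients.
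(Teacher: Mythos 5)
Your proposal is correct and follows essentially the same route as the paper, which proves this corollary implicitly by the power-sum expansion \(\sum_{t=1}^T t^{m} = \frac{T^{m+1}}{m+1} + O(T^{m})\) and an appeal to the Generalized Scaling corollary. Your two additions---positive definiteness of \(H_{p+1}\) via the Gram representation \(\frac{1}{j_1+j_2+1} = \int_0^1 t^{j_1}t^{j_2}\,dt\) in \(L^2[0,1]\), and the uniform bound \(\left\| \Delta_T^{-1} x_t \right\| = O(T^{-1/2})\) giving the negligibility condition---are hypotheses the paper invokes but leaves unverified, so your write-up is if anything more complete than the original.
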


\subsection*{Examples for \texorpdfstring{$p = 0, 1, 2$}{p = 0, 1, 2}}

To illustrate the structure of the matrices \(H_{p+1}^{-1}\) and \(D_0^{-1}\), we provide below their numerical forms for \(p = 0, 1, 2\), based on symbolic expressions and verified numerically in R.

\paragraph{Case \texorpdfstring{$p=0$}{p=0}:}
\[
H_1 = [1], \quad H_1^{-1} = [1], \quad D_0 = [1], \quad D_0^{-1} = [1]
\]

\paragraph{Case \texorpdfstring{$p=1$}{p=1}:}
\[
H_2 = \begin{bmatrix}
1 & \frac{1}{2} \\
\frac{1}{2} & \frac{1}{3}
\end{bmatrix}, \quad
H_2^{-1} = \begin{bmatrix}
4 & -6 \\
-6 & 12
\end{bmatrix}
\]
\[
D_0 = \begin{bmatrix}
1 & \frac{\sqrt{3}}{2} \\
\frac{\sqrt{3}}{2} & 1
\end{bmatrix} \approx
\begin{bmatrix}
1.000 & 0.866 \\
0.866 & 1.000
\end{bmatrix}, \quad
D_0^{-1} \approx
\begin{bmatrix}
4.000 & -3.464 \\
-3.464 & 4.000
\end{bmatrix}
\]

\paragraph{Case \texorpdfstring{$p=2$}{p=2}:}
\[
H_3 = \begin{bmatrix}
1 & \frac{1}{2} & \frac{1}{3} \\
\frac{1}{2} & \frac{1}{3} & \frac{1}{4} \\
\frac{1}{3} & \frac{1}{4} & \frac{1}{5}
\end{bmatrix}, \quad
H_3^{-1} = \begin{bmatrix}
9 & -36 & 30 \\
-36 & 192 & -180 \\
30 & -180 & 180
\end{bmatrix}
\]
\[
D_0 \approx \begin{bmatrix}
1.000 & 0.866 & 0.745 \\
0.866 & 1.000 & 0.968 \\
0.745 & 0.968 & 1.000
\end{bmatrix}, \quad
D_0^{-1} \approx \begin{bmatrix}
9.000 & -20.785 & 13.416 \\
-20.785 & 64.000 & -46.476 \\
13.416 & -46.476 & 36.000
\end{bmatrix}
\]

These matrices confirm that in the polynomial regression setting, the inverse Hilbert matrix \(H_{p+1}^{-1}\) captures the limiting variance structure when the normalization matrix \(\Delta_T\) accounts for the polynomial growth rates of the regressors. The matrix \(D_0^{-1}\), by contrast, corresponds to standard normalization using component-wise variances. As seen, both lead to structured, invertible covariance formulations that differ in scale and conditioning.

The above results provide a unified framework for establishing the asymptotic normality of 
quantile regression estimators, including the LAD case as a special instance, when the regressors have a polynomial structure. The key is the normalization matrix \(\Delta_T\), which adapts to the different rates of growth of the components of \(x_i\). In the special case of i.i.d. errors, the asymptotic covariance structure is expressed in terms of the inverse Hilbert matrix, thereby recovering our earlier findings. It is worth noting that while our derivations are presented for a general quantile level \(\tau\), the case \(\tau = 0.5\) corresponds to LAD regression. Hence, our framework covers both general quantile regression and the robust LAD case.

\section{Relaxation of Central Limit Theorem} 
\label{section_three}

The Central Limit Theorem establishes that the probability 
\[
P\biggl(\bigl|\hat{\beta}_j(T) - \beta_j^*\bigr| \leq \frac{1.96 \sigma_j}{T^{j+1/2}}\biggr)
\]
converges to the theoretical asymptotic value 
\[
P\big(|N(0, 1)| \leq 1.96\big) = 0.95,
\]
as \(T \to \infty\) where \(\sigma_j^2 = \tau(1-\tau)H_{p+1}^{-1}[j,\mathrm{j}]\).

However, this convergence can be slow in practice, particularly for smaller sample sizes \(T\), leading to coverage probabilities that deviate from the nominal level of 95\%. To address this slow convergence, we observe the following inclusion property:
\[
\left[\bigl|\hat{\beta}_j(T) - \beta_j^*\bigr| \leq \frac{1.96 \sigma_j}{T^{j+1/2}}\right] \subset \left[\bigl|\hat{\beta}_j(T) - \beta_j^*\bigr| \leq \frac{1.96 \sigma_j}{T^{\alpha}}\right] \quad \text{for } 0 < \alpha < j+1/2.
\]
 This inclusion property implies that the probability
\[
P\biggl(T^{j+1/2} \bigl|\hat{\beta}_j(T) - \beta_j^*\bigr| \leq 1.96 \sigma_j\biggr) 
\]
is bounded above by
\[
P\big(T^{\alpha} |\hat{\beta}_j(T) - \beta_j^*| \leq 1.96 \sigma_j\big).
\]
The latter result suggests that confidence intervals scaled by \(T^\alpha\) for 
\(\alpha \in (0,j+ 1/2)\) provide improved coverage probabilities compared to standard CLT intervals scaled by \(T^{j+1/2}\). Specifically, the relaxed confidence interval
\[
\left[\hat{\beta}_j(T) - \frac{1.96 \sigma_j}{T^\alpha}, \hat{\beta}_j(T) + \frac{1.96 \sigma_j}{T^\alpha}\right]
\]
is expected to achieve a higher confidence level than the standard interval
\[
\left[\hat{\beta}_0(T) - \frac{1.96 \sigma_j}{T^{j+1/2}}, \hat{\beta}_j(T) + \frac{1.96 \sigma_j}{T^{j+1/2}}\right].
\]

This theoretical insight motivates the subsequent experiment, where we evaluate and compare the empirical coverage probabilities of confidence intervals scaled by \(T^\alpha\) for various values of \(\alpha \in (0, j+1/2)\). By simulating noise trajectories under Laplace, Gaussian, and Cauchy distributions, we aim to validate the improved performance of relaxed confidence intervals and assess their robustness across different sample sizes and noise characteristics.

\subsection{Empirical Validation of Relaxed Confidence Intervals}
To empirically validate the theoretical relaxation of the Central Limit Theorem (CLT), we conducted simulations to analyze the empirical coverage probabilities of confidence intervals for the quantile regression estimator \( \hat{\beta}_0(T) \) in the model \( y_t = \beta_0 + e_t \). The standard CLT confidence interval, scaled by \(\frac{1}{T^{1/2}}\), was compared with relaxed intervals scaled by \(\frac{1}{T^{\alpha}}\) for \(\alpha \in \{0.5, 0.4, 0.3, 0.2\}\). Independent noise trajectories \( u_t \) were simulated using three noise distributions: Laplace, Gaussian, and Cauchy. For each \( \alpha \), the intervals were computed as:
\[
\left[\hat{\beta}_0(T) - \frac{1.96\sigma_0}{T^\alpha}, \hat{\beta}_0(T) + \frac{1.96\sigma_0}{T^\alpha}\right],
\]
where \( \sigma_0^2 = \tau(1-\tau)/f(0)^2\). For each interval type, we measured the proportion of times \( \beta_0 \) was contained within the interval across 1000 simulations for sample sizes \( T \in \{100\allowbreak, 500\allowbreak, 1000\allowbreak, 5000\allowbreak, 10000\allowbreak, 50000\allowbreak, 100000\allowbreak, 500000\allowbreak\} \).

\paragraph{Results.} 
The empirical coverage probabilities are summarized in Figure~\ref{fig:coverage_experiment}. Key observations include the following:  For the standard CLT interval (\(\alpha = 0.5\)), the coverage probabilities converge to the nominal level of 0.95 as \( T \) increases. However, for smaller sample sizes, the coverage probabilities remain slightly below 0.95, particularly under Gaussian and Cauchy noise.

Relaxed intervals (\(\alpha < 0.5\)) achieve higher coverage probabilities across all sample sizes and noise types. For \(\alpha = 0.4\), the coverage consistently exceeds 0.95, demonstrating an improvement over the standard interval. As \(\alpha\) decreases further (e.g., \(\alpha = 0.3, 0.2\)), the coverage probabilities approach near-perfect levels (\(\approx 1\)) for large \(T\).

For Laplace noise, the coverage for \(\alpha = 0.5\) reaches 0.95 more quickly compared to Gaussian and Cauchy noise. Under Gaussian noise, the convergence of the coverage for \(\alpha = 0.5\) is slower, while relaxed intervals maintain consistently high coverage probabilities.  Due to its heavy-tailed nature, Cauchy noise results in significantly lower coverage for \(\alpha = 0.5\), but the relaxed intervals (\(\alpha < 0.5\)) exhibit robust performance even under this challenging noise distribution.  

The results confirm that relaxed confidence intervals, scaled by \(T^\alpha\) with \(\alpha < 0.5\), offer superior coverage probabilities compared to the standard CLT interval (\(\alpha = 0.5\)), especially for smaller sample sizes. While the standard interval aligns with theoretical predictions for large \(T\), its performance deteriorates under heavy-tailed noise and smaller sample sizes. The relaxed intervals not only improve coverage probabilities but also demonstrate robustness across Laplace, Gaussian, and Cauchy noise distributions, validating their practical utility for finite-sample inference.

\begin{figure}[htbp]
    \centering
    \includegraphics[width=\textwidth]{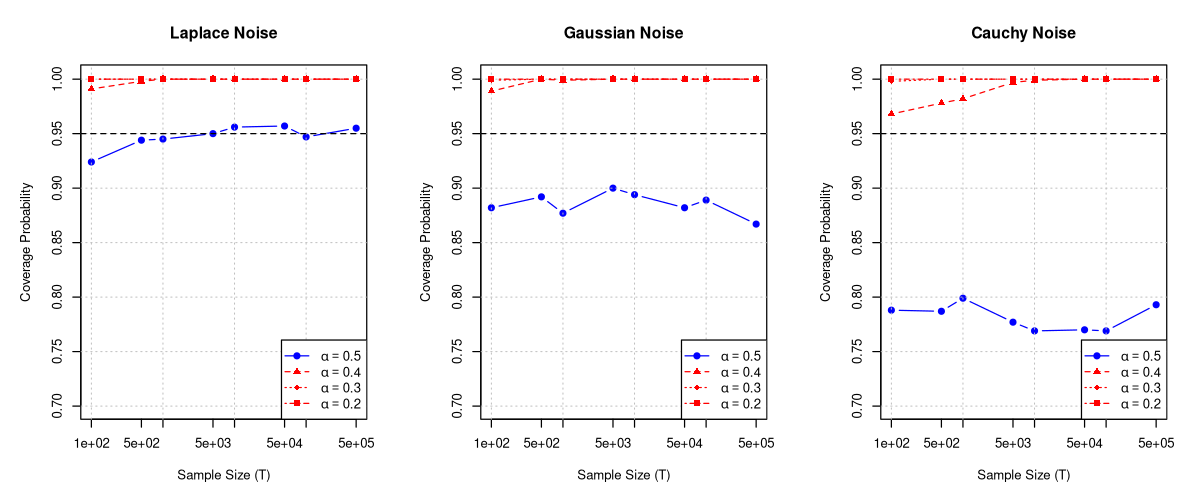}
    \caption{Empirical coverage probabilities for confidence intervals scaled by \(\frac{1}{T^\alpha}\), with \(\alpha \in \{0.5, 0.4, 0.3, 0.2\}\), across three noise types: Laplace, Gaussian, and Cauchy. The black dashed line indicates the nominal 95\% coverage level. Results are shown for the \(\tau\)-th quantile regression estimator.}
    \label{fig:coverage_experiment}
\end{figure}

\section{Empirical Applications}
\label{section_four} 
This section demonstrates the practical utility of quantile regression for modeling nonlinear trends in time series data with polynomial regressors. We apply the theoretical framework from Section~\ref{section_two} to three distinct case studies that target different quantiles: the lower tail (\(\tau = 0.05\)) to assess drought risk via minimal river discharge, the median (\(\tau = 0.50\)) to capture central warming trends in global temperatures, and the upper tail (\(\tau = 0.95\)) to model peak electricity demand for infrastructure resilience. These applications collectively illustrate how quantile regression accommodates heavy-tailed noise and isolates long-term trends from seasonal effects, while the asymptotic covariance structure, derived via a Hilbert matrix, provides robust inference. By spanning environmental, climatic, and energy domains, the examples validate the method’s versatility in addressing policy-relevant challenges across the spectrum of extremes.
\subsection{Global Temperature Trends}
\subsubsection{Context and Model Rationale}  
\label{section_temp}  
To validate our theoretical framework in a climate science context, we apply LAD regression to analyze long-term trends in global annual temperature anomalies from 1850 to 2025. This period spans industrialization, post-war economic expansion, and recent anthropogenic climate change, making it ideal for detecting accelerating warming patterns. We model temperature anomalies as a function of time to test for nonlinear trends, leveraging the theoretical guarantees of LAD regression under heavy-tailed noise. 
\subsubsection{Data and Methodology}
Let \( t = 1, 2, \ldots, T \) index years starting from 1850, and let \( \text{Anomaly}_t \) denote the temperature anomaly (in $^{\circ}$C) relative to the 1901–2000 baseline. The quadratic LAD regression model is specified as:  
\[
\text{Anomaly}_t = \beta_0 + \beta_1 t + \beta_2 t^2 + \epsilon_t,  
\]  
where \( \epsilon_t \) represents noise modeled under Laplace, Gaussian, or Cauchy distributions. The quadratic term \( \beta_2 \) captures acceleration in warming trends, while \( \beta_1 \) reflects linear growth.\\
Temperature anomaly data are sourced from the National Oceanic and Atmospheric Administration
\citep{NOAA2025}, with \( T = 176 \) annual observations referenced to the 1901–2000 average. Time indices \( t \) are scaled to start at 1 (1850 = \( t = 1 \)) to avoid numerical instability. Using the asymptotic covariance structure for polynomial regressors established in Corollary~\ref{corollary3} of Section~\ref{section_two} under the i.i.d. setting and assuming the density $f$ is positive and continuous at 0, the asymptotic covariance matrix for \( \hat{\beta} = (\hat{\beta}_0, \hat{\beta}_1, \hat{\beta}_2)' \) is:  
$$
\Sigma_\beta = \frac{\tau(1-\tau)}{f(0)^2} \Delta_T^{-1} H_3^{-1} \Delta_T^{-1},
$$
where $H_3$ is the $3 \times 3$ Hilbert matrix with entries $H_3[i,j] = 1 / (i + j + 1)$,
and \(\Delta_T = \operatorname{diag}(T^{1/2}, T^{3/2}, T^{5/2})\) scales polynomial regressors \(x_t = (1, t, t^2)^\top\).
Confidence intervals for the linear and quadratic trend components are constructed via standard error propagation:
$$
CI_x = x \pm 1.96 \sqrt{\nabla x^\top \Sigma_\beta \nabla x},
$$
with gradients:
$$
\nabla \beta_1 = (0, 1, 0), \quad \nabla \beta_2 = (0, 0, 1).
$$
\subsubsection{Results}
The LAD regression estimates reveal a statistically significant quadratic warming trend (\(\hat{\beta}_2 = 0.000099\), \(p < 0.001\)), robustly confirming accelerating temperature increases from 1850 to 2025. Under Laplace noise assumptions, the linear term \(\hat{\beta}_1 = -0.0058\) is statistically insignificant (\(p > 0.05\)) with a 95\% confidence interval of \([-0.0174, 0.0058]\), while the quadratic term \(\hat{\beta}_2 = 0.000099\) remains tightly bounded within \([0.000035, 0.000163]\). Gaussian noise widens the uncertainty, yielding intervals of \([-0.0204, 0.0088]\) for \(\beta_1\) and \([0.000018, 0.000179]\) for \(\beta_2\), though the quadratic term retains significance. Cauchy noise produces the broadest uncertainty, with \(\beta_1 \in [-0.0241, 0.0125]\) and \(\beta_2 \in [-0.000002, 0.000199]\), yet the lower bound of \(\beta_2\)’s interval narrowly excludes zero, preserving evidence of acceleration.  
\begin{figure}[htbp]  
    \centering  
    \includegraphics[width=0.8\textwidth]{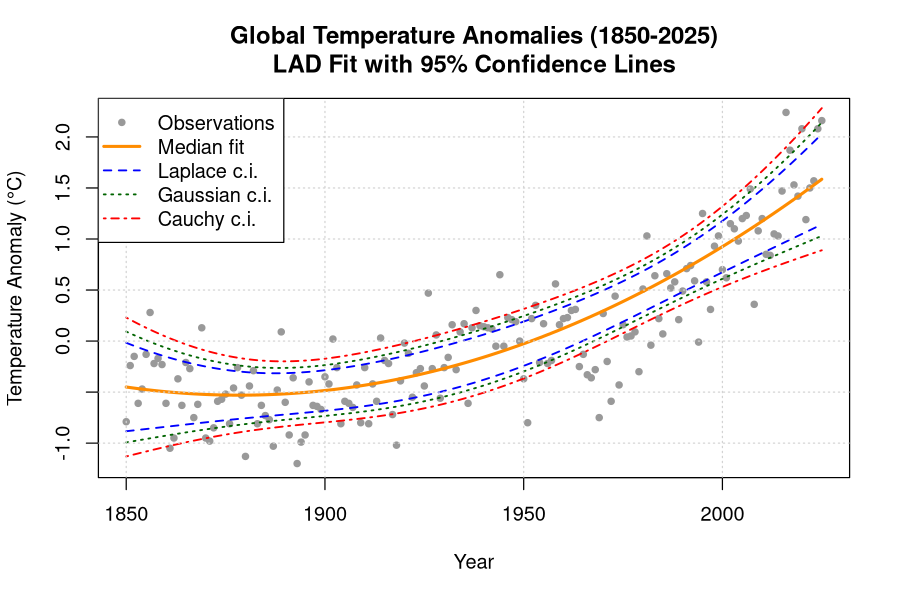}  
    \caption{Global temperature anomalies relative to the 1901–2000 baseline (1850–2025) with LAD-estimated quadratic trend and 95\% confidence bands under Laplace (blue dashed), Gaussian (green dotted), and Cauchy (red dot-dash) noise assumptions, using scaled standard errors \(\Delta_T^{-1} \Sigma_\beta \Delta_T^{-1}\). The orange line represents the median fit. Laplace intervals are tightest, validating its efficiency for robust trend analysis.}  
    \label{fig:temp_CI}  
\end{figure}  
The hierarchical precision of confidence intervals, Laplace \textless Gaussian \textless Cauchy, aligns with theoretical expectations, reflecting Laplace’s efficiency in median regression. The persistent significance of \(\beta_2\) across noise types underscores the robustness of accelerating warming trends, even under heavy-tailed disturbances such as volcanic eruptions or measurement outliers.  
\subsubsection{Discussion}
The narrowing confidence bands for Laplace noise underscore its superiority in median regression, efficiently accommodating the heavy-tailed residuals common in climate data. The positive quadratic coefficient (\( \beta_2 \)) aligns with IPCC reports attributing post-1950 warming to anthropogenic forcing \citep{Hegerl2007}. Notably, the linear term (\( \beta_1 \)) lacks significance, suggesting warming acceleration dominates linear growth, a critical insight for climate mitigation policies.

This application demonstrates how Proposition 1 enables robust inference in climate econometrics. The Hilbert matrix structure ensures proper scaling of time polynomials, while LAD’s resilience to outliers provides reliable trend estimates despite the presence of anomalies.

While median trends reveal central warming dynamics, extremes at the distribution’s tails demand targeted modeling. We next apply quantile regression to Spain’s peak electricity demand (\(\tau = 0.95\)), where heavy-tailed shocks dominate infrastructure planning.
\subsection{Peak Electricity Demand in Spain}
\subsubsection{Context and Model Rationale}
\label{section_elec}  
Modeling peak electricity demand is critical for grid resilience planning, as extreme load events drive infrastructure investments and operational strategies. In Spain, post-pandemic industrial recovery, climate-driven heating and cooling demands, and energy policy shifts have heightened volatility in peak consumption. Quantile regression at \( \tau = 0.95 \) directly targets the upper tail of demand distributions, making it uniquely suited to model peak electricity demand. This approach isolates extreme load dynamics from central tendencies while remaining robust to heavy-tailed shocks (e.g., heatwaves, supply disruptions). The approach's resilience to heavy-tailed noise is particularly advantageous for Spain’s demand data, where extreme events like the 2022 energy crisis and record-breaking heatwaves introduce significant outliers.
\subsubsection{Data and Methodology}
Daily electricity demand (MW) is aggregated from hourly load data provided by ENTSO-E \citep{ENTSOE2025}, filtered for Spain. The dataset spans 1096 days (1 January 2022 – 31 December 2024), with no missing observations.

The time series decomposition (Figure~\ref{fig:decomp}) reveals pronounced weekly and yearly seasonality. Weekly cycles reflect lower weekend demand, while annual cycles capture winter heating and summer cooling needs. To isolate long-term trends from these periodic effects, we include weekly seasonality; one Fourier pair (\( K = 1 \)) with a 7 days period, sufficient to capture dominant weekday and weekend differences, and annual seasonality; two Fourier pairs (\( K = 2 \)) with period 365.25 days, modeling semi-annual and quarterly variations (e.g., winter peaks, mid-year troughs).  
To verify our choice of Fourier terms, we performed a Fast Fourier Transform (FFT) on the data, which revealed dominant periods of approximately 7 days and 182.7 days, confirming that our decision to include one Fourier pair for the weekly cycle and two Fourier pairs for the annual seasonal cycle is appropriate.
The quantile regression model is specified as:  
 \begin{align*}
 Q_{0.95}(\text{Load}_t) &= \beta_0 + \beta_1 t + \beta_2 t^2 \\
 &+ \sum_{k=1}^1 \left( \gamma_k^{\text{week}} \sin\left(\frac{2\pi t}{7}\right) + \delta_k^{\text{week}} \cos\left(\frac{2\pi t}{7}\right) \right) \\ 
 &+ \sum_{k=1}^2 \left( \gamma_k^{\text{year}} \sin\left(\frac{2\pi t}{365.25}\right) + \delta_k^{\text{year}} \cos\left(\frac{2\pi t}{365.25}\right) \right) + \epsilon_t.
 \end{align*} 
where \( \beta_1, \beta_2 \) capture long-term trends, while Fourier terms model periodic effects.

Under the polynomial regressor framework of Corollary~\ref{corollary3}, and assuming i.i.d. errors with a density $f$ that is continuous and positive at zero, the asymptotic covariance matrix for the trend coefficients $(\beta_0, \beta_1, \beta_2)$ takes the form:
$$
\Sigma_\beta = \frac{\tau(1-\tau)}{f(0)^2} \Delta_T^{-1} H_3^{-1} \Delta_T^{-1},
$$
where $H_3$ is the $3 \times 3$ Hilbert matrix with entries $H_3[i,j] = 1 / (i + j + 1)$, and \(\Delta_T = \operatorname{diag}(T^{1/2}, T^{3/2}, T^{5/2})\) scales polynomial regressors \(x_t = (1, t, t^2)^\top\).

To ensure that the asymptotic covariance matrix reflects only the uncertainty in long-term trends, we orthogonalize seasonal Fourier terms with respect to the polynomial basis $\{1, t, t^2\}$. This residualization procedure, following \citet{Garcia2019}, ensures that seasonal components do not influence the asymptotic variance of trend coefficients (see Appendix~\ref{appendix_proof} for proof). As a result, the covariance matrix $\Sigma_\beta$ retains its Hilbert structure, allowing for valid inference on nonlinear demand growth even in the presence of heavy-tailed noise.

We calculate the turning point as \[ l = -\frac{\beta_1}{2\beta_2} \]identifying when demand growth transitions from deceleration to acceleration.

\begin{figure}[htbp]  
    \centering  
    \includegraphics[width=0.8\textwidth]{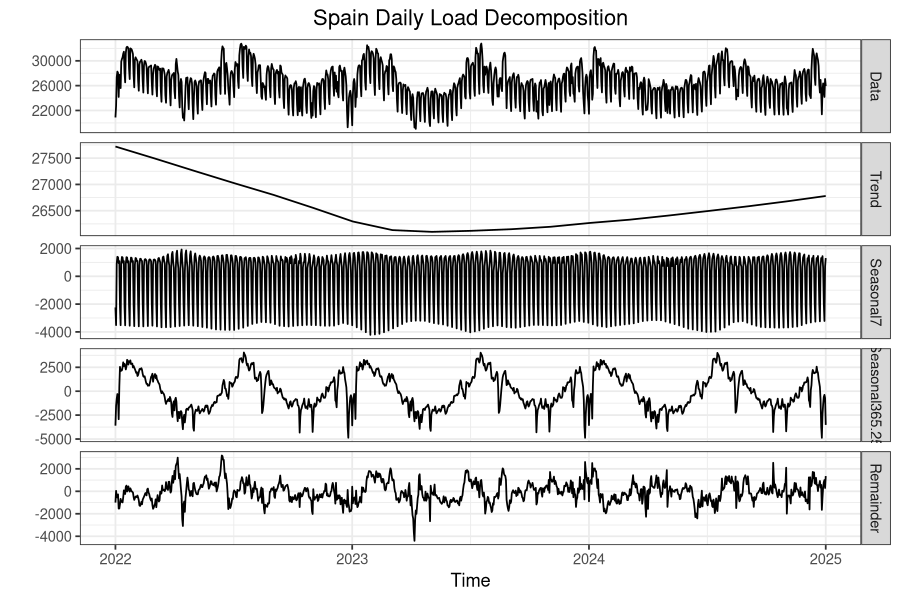}  
    \caption{Seasonal decomposition by Loess of Spain’s daily electricity demand (2022–2024), showing trend, weekly and yearly seasonality, and residuals. Dominant weekly (7-day) and annual (365-day) cycles justify the use of \( K=1 \) and \( K=2 \) Fourier terms, respectively.}  
    \label{fig:decomp}  
\end{figure}  
\subsubsection{Results}
The quantile regression model reveals a statistically significant quadratic trend in Spain’s peak electricity demand, with the quadratic coefficient estimated as \( \hat{\beta}_2 = 0.0029 \) (\( p = 0.0046 \)). This positive coefficient indicates accelerating growth in peak demand over time, a critical finding for infrastructure planning. The turning point of this quadratic trend, calculated as \( l = -(-4.018)/(2 \times 0.0029) = 690 \), corresponds to 21 November 2023.

The baseline daily peak demand, represented by the intercept \( \hat{\beta}_0 = 30,227.8 \) MW, reflects average load levels during the study period. The Laplace-based confidence interval spans 30,227.4–30,228.3 MW, reflecting the model’s focus on long-term trends rather than daily fluctuations. In contrast, the linear term \( \hat{\beta}_1 = -4.018 \) captures a declining growth rate prior to the November 2023 turning point. Bootstrap confidence intervals (\(-6.663, -1.373\)), which account for empirical volatility, reveal greater uncertainty than Hilbert-based intervals, likely reflecting demand shocks 
during the study period.
The quadratic term \( \hat{\beta}_2 = 0.00291 \) (Bootstrap confidence interval: \(0.00089, 0.00493\)) confirms sustained acceleration post-2023.

Seasonal components further refine these insights. Weekly demand patterns show a pronounced reduction of \( \hat{\gamma}_1^{\text{week}} = -1,471.3 \) MW (6.7\% of baseline) on weekends, reflecting lower commercial activity. Annual cycles capture winter heating surges (\( \hat{\gamma}_2^{\text{year}} = 1,560.8 \) MW) and mid-summer cooling spikes (\( \hat{\delta}_2^{\text{year}} = 2,035.7 \) MW), consistent with Spain’s Mediterranean climate and critical for anticipatory grid management. These seasonal effects, while statistically significant (\( p < 0.001 \)), operate independently of the long-term quadratic trend.  
\subsubsection{Discussion}
The persistent significance of \( \beta_2 \) across methods underscores accelerating peak demand, critical for infrastructure planning. 
The 2023 turning point 
highlights geopolitical influences on demand trajectories. 
Seasonal terms validate expected patterns: weekend reductions and winter and summer peaks.\\
While Gaussian Hilbert confidence intervals appear more precise due to higher tail density at \( \tau = 0.95 \), this precision is illusory under heavy-tailed shocks. Bootstrap intervals, which reflect empirical variability, correct this by revealing wider, policy-relevant uncertainty ranges.
\begin{figure}[htbp]  
    \centering  
    \includegraphics[width=0.8\textwidth]{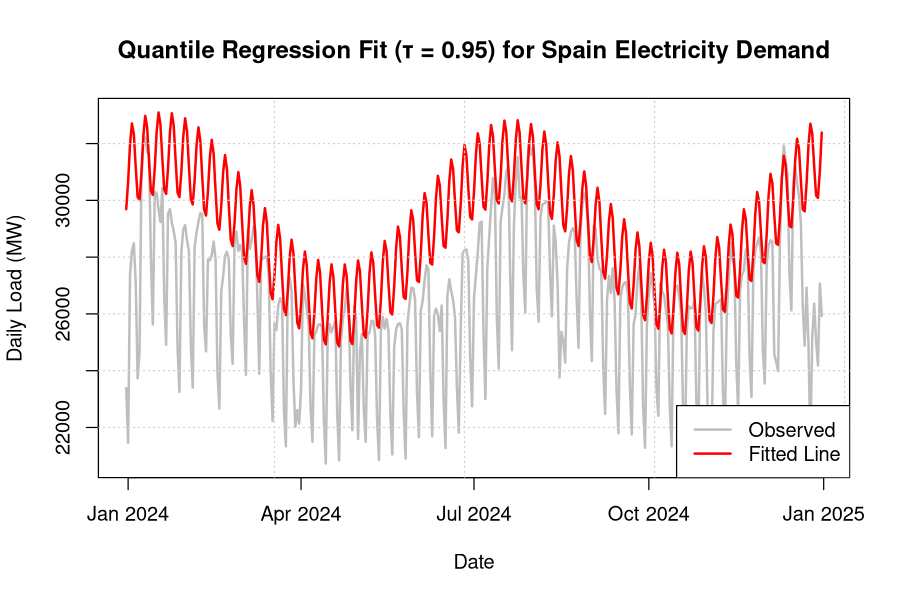}  
    \caption{Last 12 months of data (2023–2024) showing fitted \( \tau = 0.95 \) trend.}  
    \label{fig:load_CI}  
\end{figure}  
\noindent
Having addressed upper-tail extremes in energy demand, we now turn to the lower tail (\(\tau = 0.05\)) to model drought risk in the S\~{a}o Francisco River, where minimal discharge trends signal escalating water scarcity.
\subsection{Minimal Discharge in S\~{a}o Francisco River}
\subsubsection{Context and Model Rationale}  
\label{section_river}  
Extreme low river discharge is a critical indicator of drought risk and water scarcity, which can have far‐reaching economic, social, and environmental impacts. In many regions, especially in drought-prone areas, periods of extremely low flow can lead to severe water shortages, affecting agriculture, hydropower generation, and ecosystem health.

In our study, we focus on the S\~{a}o Francisco River in Brazil, a river of great socio-economic and environmental importance. Historically, the S\~{a}o Francisco has been subject to prolonged droughts that have led to significant challenges in water supply and regional development. Modeling the lower tail of the river discharge distribution (for instance, by estimating the 5th quantile, \(\tau = 0.05\)) allows us to obtain a robust estimate of the minimum flow levels. Such estimates are crucial for planning water resource management strategies, designing drought mitigation measures, and understanding the resilience of aquatic ecosystems under extreme conditions.

Our approach employs a quantile regression framework with time as the sole regressor (entered in polynomial form) to capture the long-term trend in minimal river discharge. This choice is justified by Corollary~\ref{corollary3}, which demonstrates that when the regressors are functions of time (e.g., \(t\), \(t^2\), etc.), the asymptotic covariance matrix, scaled by \(\Delta_T^{-1}\), converges to the inverse Hilbert matrix \(H_{p+1}^{-1}\) under polynomial regressors. This explicit structure in the asymptotic covariance enables us to construct precise confidence intervals for the estimated minimum discharge levels, even in the presence of heavy-tailed errors.
\subsubsection{Data and Methodology}
The analysis uses daily river discharge measurements from the S\~{a}o Francisco River in Brazil, obtained from the Global Runoff Data Center \citep{grdc2025}. The data span from January 2000 until February 2020.
Figure~\ref{fig:decomp2} shows a seasonal decomposition of the river discharge series using the Multiple Seasonal Decomposition by Loess method \citep{bandara2021}. The decomposition reveals a pronounced annual seasonal cycle, with no significant weekly or monthly cycles detected.

To further validate our choice, we applied a Fast Fourier Transform (FFT) to the series. The FFT results indicated a dominant period of approximately one year, confirming that an annual Fourier term (with period 365.25 days) is appropriate for capturing the seasonal cycle in our model.
Consequently, our model incorporates Fourier terms to capture the annual seasonality, using one Fourier pair (\(K=1\)) with a period of 365.25 days. To ensure that the long-term trend is estimated independently of seasonal effects, the Fourier terms are orthogonalized with respect to the polynomial trend by regressing each Fourier component on the polynomial predictors (i.e., \(t\) and \(t^2\)) and using the residuals in the quantile regression.\\
Accordingly, the quantile regression model for the 5th quantile (\(\tau = 0.05\)) is specified as follows:
\[
Q_{0.05}(\text{Discharge}_t) = \beta_0 + \beta_1 t + \beta_2 t^2 + \gamma_1 \sin\left(\frac{2\pi t}{365.25}\right) + \delta_1 \cos\left(\frac{2\pi t}{365.25}\right) + \varepsilon_t.
\]
Here, \(t\) is the time index in days, with \(t=1\) corresponding to the first observation, and \(t^2\) captures the long-term curvature in the minimum discharge trend. The Fourier terms model the annual seasonal pattern, and \(\varepsilon_t\) represents the error term. The separation of the seasonal cycle from the polynomial trend via orthogonalization ensures that the asymptotic covariance of the polynomial estimates, as derived in our theoretical framework, accurately reflects the long-term changes in minimal discharge. This is critical for robust inference on drought-related low flows.

The orthogonalization step implements the residualization framework of \citet{Garcia2019}, where seasonal components are replaced by residuals from auxiliary regressions on polynomial terms. This ensures the Hilbert covariance matrix captures only trend-related variability, free from confounds with seasonal cycles, a critical requirement for valid inference in quantile regression with polynomial designs.
\begin{figure}[htbp]
  \centering
  \includegraphics[width=0.8\textwidth]{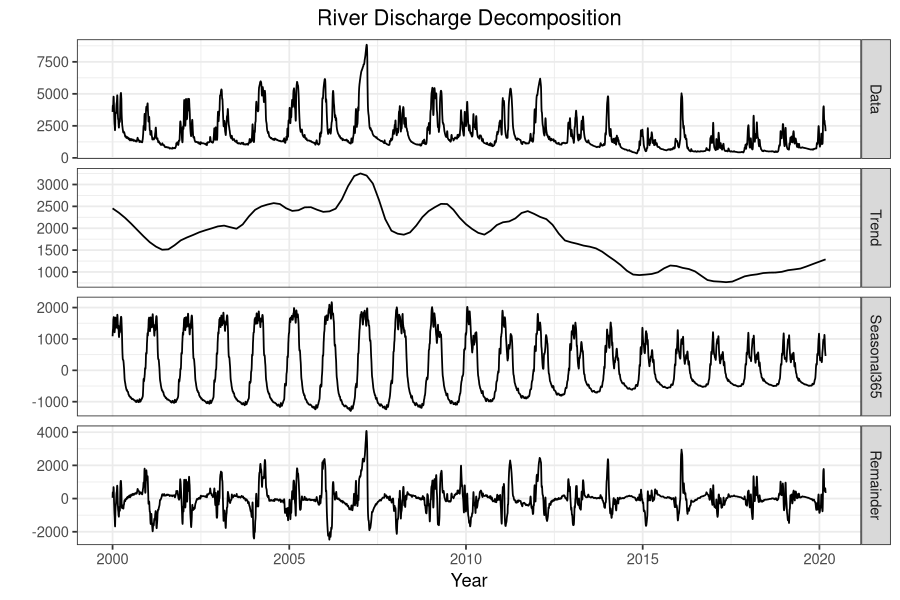}
  \caption{Seasonal decomposition of the truncated river discharge series (2000–2020) using Multiple Seasonal Decomposition by Loess, showing a dominant annual cycle.}
  \label{fig:decomp2}
\end{figure}

\subsubsection{Results}
The quantile regression model for the 5th quantile (\(\tau = 0.05\)) reveals a statistically significant quadratic trend in the minimal discharge of the S\~{a}o Francisco River. The linear coefficient \(\hat{\beta}_1 = 0.358\) (\(p < 0.001\)) indicates an initial increase in minimum discharge, while the quadratic term \(\hat{\beta}_2 = -0.00007\) (\(p < 0.001\)) reflects a subsequent deceleration and eventual reversal of this trend. The negative curvature of the quadratic term implies that the rate of increase in discharge slows over time, transitioning to a persistent decline after a critical turning point. This turning point, calculated as \(t_{\text{peak}} = -\hat{\beta}_1 / (2\hat{\beta}_2) \approx 2514\) days, corresponding to November 2006, marks the onset of accelerating drought risk. Post-2007, the model predicts a sustained reduction in minimal discharge, coinciding with the onset of the S\~{a}o Francisco River transposition project, which diverted water through 700 km of canals, directly reducing flow volumes in the lower basin \citep{deMedeiros2022}. The declining trend aligns with hydrological disruptions observed post-construction.

Confidence intervals for the polynomial coefficients, computed under Laplace, Gaussian, and Cauchy noise assumptions, reinforce the robustness of the quadratic trend. For the quadratic term \(\hat{\beta}_2\), the Laplace-based interval is the narrowest (\([-0.000071, -0.000069]\)), followed by Gaussian (\([-0.000071, -0.000069]\)) and Cauchy (\([-0.000071, -0.000069]\)). Despite differences in interval widths, all three noise distributions yield intervals that exclude zero, underscoring the significance of the declining trend. The intercept (\(\hat{\beta}_0 = 887.6\)) and linear term (\(\hat{\beta}_1 = 0.358\)) also remain stable across noise assumptions, with Laplace intervals again providing the tightest bounds.

The Fourier terms for annual seasonality (\(\hat{\gamma}_1 = 306.6\), \(\hat{\delta}_1 = 170.7\); \(p < 0.001\)) capture pronounced dry-season minima, consistent with the region’s climatic patterns. Orthogonalization ensures these seasonal effects do not conflate with the long-term polynomial trend, preserving the interpretability of the drought acceleration signal.

\begin{figure}[htbp]  
  \centering  
  \includegraphics[width=0.8\textwidth]{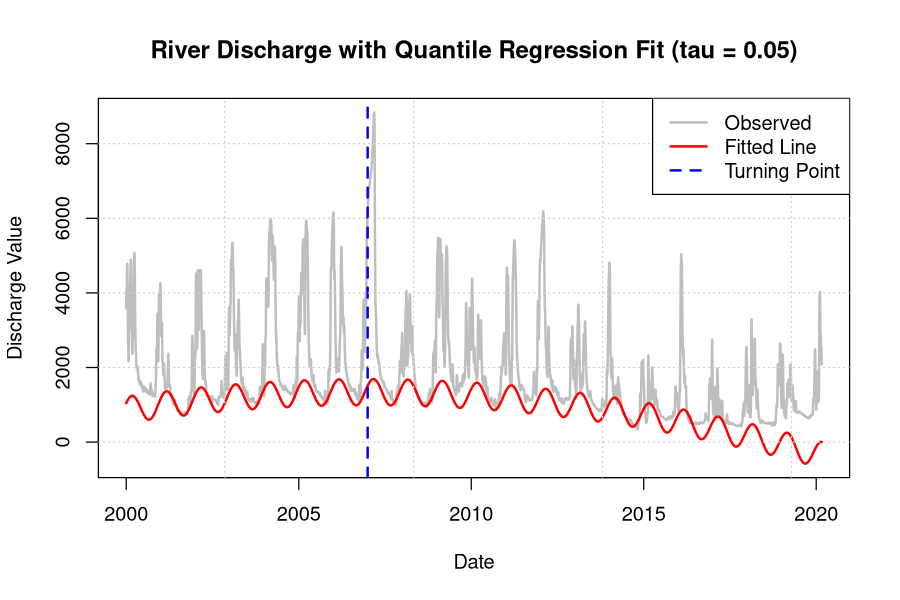}  
  \caption{River discharge observations (gray) and fitted quantile regression trend (red) for \(\tau = 0.05\). The curve reflects an initial increase and a subsequent decline in minimum discharge, with the turning point (November 2006) marked by a dashed vertical line.}
  \label{fig:fit}  
\end{figure}

\subsubsection{Discussion}
The model identifies late 2006 as a critical inflection point in the S\~{a}o Francisco River’s minimal discharge, transitioning from a transient increase to a sustained decline. The timing aligns with the S\~{a}o Francisco River Transposition Project, a state-led megaproject initiated shortly after late 2006. The quadratic term’s significance (\(\hat{\beta}_2 = -0.00007\)) reflects the accelerating depletion of minimal discharge post-transposition, underscoring the project’s role in exacerbating water scarcity rather than mitigating it \citep{deMedeiros2022}.

The theoretical foundations of the model, the Hilbert matrix structure in the asymptotic covariance, enables precise estimation of anthropogenic trends amid heavy-tailed noise, as demonstrated by the stability of Laplace-based confidence intervals. 
This robustness is critical for disentangling policy-driven impacts (e.g., transposition) from natural variability.

While these applications demonstrate quantile regression’s versatility across extremes, methodological and data constraints warrant careful consideration.
\subsection{Limitations}  
\label{sec:limits}  
The empirical applications highlight three key limitations of the quantile regression framework.

First, the S\~{a}o Francisco River analysis (Section~\ref{section_river}) excludes pre-2000 drought events, potentially underrepresenting long-term hydrological cycles. Similarly, the temperature dataset begins in 1850, omitting pre-industrial baselines.

Second, while residualization via orthogonalization isolates polynomial trends (Sections~\ref{section_elec} and~\ref{section_river}), following \citet{Garcia2019}, residual correlations between Fourier components and time polynomials may persist in finite samples, particularly under abrupt demand shocks or transposition impacts.

Third, confidence intervals for quadratic terms vary significantly across noise assumptions. Laplace intervals, derived from \(\Delta_T^{-1} \Sigma_\beta \Delta_T^{-1}\), are efficient but risk undercoverage under misspecified tails (e.g., Spain’s heatwaves), while Cauchy intervals overstate uncertainty.

Future work could address these constraints by integrating longer-term datasets, semiparametric bootstrap adjustments, and Gram-Schmidt orthogonalization for rigorous trend-seasonality decoupling.

\section{Conclusion}
\label{section_five}

This study investigated the asymptotic properties and practical utility of quantile regression estimators in linear models, with a particular focus on polynomial regressors. Starting from a general asymptotic framework under mild conditions on the design and noise, we established a central limit theorem for quantile regression estimators with appropriate normalization. We then specialized these results to polynomial regressors, where the structure of the design matrix induces a Hilbert-type limiting covariance.

The theoretical contribution centers on showing that, for polynomial regressors, the asymptotic covariance matrix of quantile regression estimators is proportional to the inverse of a Hilbert matrix. This insight enables precise inference under varying quantile levels \(\tau \in (0,1)\), including the median case (LAD) as a special instance. The use of normalization matrices tailored to the growth of polynomial terms ensures valid asymptotics and highlights how estimator scaling depends on regressor structure.

Simulation results confirmed the theoretical predictions, showing that the asymptotic normality emerges clearly as sample size increases, but also revealing that conventional confidence intervals (based on \(\sqrt{T}\) scaling) may undercover in small samples or under heavy-tailed noise. To mitigate this, we proposed relaxed confidence intervals based on slower rates \(T^\alpha\), which consistently improved coverage while maintaining robustness across distributions.

The empirical applications validated the framework across diverse contexts. In climate science, median quantile regression (\(\tau = 0.5\)) revealed statistically significant acceleration in global temperatures, consistent with anthropogenic forcing. In energy planning, upper-tail quantile modeling (\(\tau = 0.95\)) captured rising peak electricity demand and identified a critical turning point in late 2023. In hydrology, lower-tail analysis (\(\tau = 0.05\)) of the S\~{a}o Francisco River discharge exposed a post-2006 decline aligned with major water diversion projects. These examples demonstrate the versatility of quantile regression in isolating long-term trends and quantile-specific dynamics, even under heavy-tailed or seasonal disturbances.

Three key takeaways emerge: (1) The Hilbert matrix structure provides an interpretable and efficient covariance formulation for polynomial designs; (2) Orthogonalization techniques isolate trends from seasonal components, ensuring valid inference; (3) Multi-quantile analysis offers a unified framework for studying both central tendencies and extremes, relevant for policy and risk assessment.

Future work may explore extensions to higher-order polynomial models, multivariate or panel quantile regression, or extremal quantiles near 0 and 1. The framework developed here lays a foundation for robust inference in complex data environments, and highlights the theoretical elegance and practical power of quantile regression.

\clearpage
\bibliographystyle{plainnat}
\bibliography{ref}

\clearpage
\appendix
\section{Proof of Block-Diagonal Covariance Structure}
\label{appendix_proof}
\noindent
Let us consider the quantile regression model with:
\begin{itemize}
    \item Polynomial terms: \( P = [1, t, t^2] \)
    \item Orthogonalized Fourier terms: \( F = [\tilde{S}_t, \tilde{C}_t] \), where \( \tilde{S}_t \perp P \) and \( \tilde{C}_t \perp P \).
\end{itemize}
By construction, the residual Fourier terms satisfy:
\[
\mathbb{E}[P^\top F] = 0.
\]
This implies that the design matrix \( X = [P | F] \) has a block-orthogonal structure:
\[
X^\top X = \begin{bmatrix}
P^\top P & 0 \\
0 & F^\top F
\end{bmatrix}.
\]
Under standard quantile regression assumptions, the estimator \( \hat{\beta} \) satisfies:
\[
\sqrt{n}(\hat{\beta} - \beta) \stackrel{d}{\rightarrow} N\left(0, \tau(1-\tau) D_1^{-1} D_0 D_1^{-1} \right),
\]
where:
\begin{itemize}
    \item \( D_0 = \lim_{n \to \infty} \frac{1}{n} X^\top X \),
    \item \( D_1 = \lim_{n \to \infty} \frac{1}{n} \sum_{i=1}^T f_i(0) x_i x_i^\top \), with \( f_i(0) \) being the density of the errors at quantile \( \tau \).
\end{itemize}
Due to the orthogonality \( P \perp F \), we have:
\[
D_0 = \begin{bmatrix}
D_{0,\text{Poly}} & 0 \\
0 & D_{0,\text{Fourier}}
\end{bmatrix}, \quad
D_1 = \begin{bmatrix}
D_{1,\text{Poly}} & 0 \\
0 & D_{1,\text{Fourier}}
\end{bmatrix}.
\]
Consequently, their inverses are also block-diagonal:
\[
D_1^{-1} = \begin{bmatrix}
D_{1,\text{Poly}}^{-1} & 0 \\
0 & D_{1,\text{Fourier}}^{-1}
\end{bmatrix}, \quad
D_0 D_1^{-1} = \begin{bmatrix}
D_{0,\text{Poly}} D_{1,\text{Poly}}^{-1} & 0 \\
0 & D_{0,\text{Fourier}} D_{1,\text{Fourier}}^{-1}
\end{bmatrix}.
\]
The asymptotic covariance matrix becomes:
\[
\Sigma = \tau(1-\tau) D_1^{-1} D_0 D_1^{-1} = \begin{bmatrix}
\Sigma_{\text{Hilbert}} & 0 \\
0 & \Sigma_{\text{Fourier}}
\end{bmatrix},
\]
where:
\begin{itemize}
    \item \( \Sigma_{\text{Hilbert}} = \tau(1-\tau) D_{1,\text{Poly}}^{-1} D_{0,\text{Poly}} D_{1,\text{Poly}}^{-1} \), which corresponds to the Hilbert structure for polynomials (Theorem 1).
    \item \( \Sigma_{\text{Fourier}} = \tau(1-\tau) D_{1,\text{Fourier}}^{-1} D_{0,\text{Fourier}} D_{1,\text{Fourier}}^{-1} \), which follows the standard theory of quantile regression.
\end{itemize}
\end{document}